\newif\ifIJM
\IJMfalse

\ifIJM
\documentclass{ijmart}
\else
\documentclass[a4paper,parskip,abstract]{scrartcl}
\fi

\usepackage[utf8]{inputenc}
\usepackage{amsmath}
\usepackage{amsfonts}
\usepackage{mathtools}
\usepackage{faktor}
\usepackage{nicefrac}
\usepackage{enumerate}
\usepackage{dsfont}
\usepackage{aliascnt}
\usepackage[pdfauthor={Joachim Breitner},pdftitle={Loop subgroups of Fr and the image of their stabilizer subgroups in GLr(Z)},pdfborder={0 0 .2}]{hyperref}
\ifIJM
\else
\usepackage[amsmath,thmmarks]{ntheorem}
\fi
\usepackage{tikz}
\usetikzlibrary{automata}
\usetikzlibrary{decorations}
\usepackage{microtype}

\usepackage[numbers,square]{natbib}

\ifIJM\else
% theorem-settings
\theoremnumbering{arabic}
\theoremstyle{plain}
\theorembodyfont{\itshape}
\theoremheaderfont{\normalfont\bfseries}
\theoremseparator{}
\fi

\newtheorem{theorem}{Theorem}
\newtheorem{proposition}{Proposition}

\newaliascnt{lemma}{proposition}
\newtheorem{lemma}[lemma]{Lemma}
\aliascntresetthe{lemma}

\newaliascnt{corollary}{proposition}
\newtheorem{corollary}[corollary]{Corollary}
\aliascntresetthe{corollary}

\ifIJM\else
\theorembodyfont{\upshape}
\fi
\newaliascnt{example}{proposition}
\newtheorem{example}[example]{Example}
\aliascntresetthe{example}

\newaliascnt{remark}{proposition}
\newtheorem{remark}[remark]{Remark}
\aliascntresetthe{remark}

\newaliascnt{definition}{proposition}
\newtheorem{definition}[definition]{Definition}
\aliascntresetthe{definition}

\ifIJM\else
\theoremstyle{nonumberplain}
\theoremheaderfont{\scshape}
\theorembodyfont{\normalfont}
\theoremsymbol{\ensuremath{_\blacksquare}}
\newtheorem{proof}{Proof}
\fi

% for \autoref

\ifIJM\else
% get rid of fourier fontenc warnings
% from http://newsgroups.derkeiler.com/Archive/Comp/comp.text.tex/2006-01/msg00679.html
\makeatletter
\let\my@@font@warning\@font@warning
\let\@font@warning\@font@info
\makeatother
\usepackage{fourier}
\makeatletter
\let\@font@warning\my@@font@warning
\makeatother 
\fi

\DeclareMathOperator{\Aut}{Aut}
\DeclareMathOperator{\GL}{GL}
\DeclareMathOperator{\SL}{SL}
\DeclareMathOperator{\Stab}{Stab}
\DeclareMathOperator{\Id}{Id}

\DeclareMathOperator{\NT}{NT}
\DeclareMathOperator{\Diag}{Diag}
\DeclareMathOperator{\Sym}{Sym}
\newcommand{\da}{\coloneqq}

\newcommand{\Z}{\mathds Z}
\newcommand{\N}{\mathds N}
\newcommand{\ev}{\mathds 1}

\hyphenation{Karls-ruhe Ma-the-ma-tische}

\author{Joachim Breitner}
\ifIJM
\address{Karlsruhe Institute of Technology, Germany}
\email{mail@joachim-breitner.de}
\title[Loop subgroups of $F_r$]{Loop subgroups of $F_r$ and the image of their stabilizer subgroups in $\GL_r(\Z)$}
\else
\title{Loop subgroups of $F_r$ and the image of their stabilizer subgroups in $\GL_r(\Z)$}
\fi
\begin{document}
\ifIJM\else
\maketitle
\fi

\begin{abstract}
\noindent For a representative class of subgroups of $F_r$, the image of their stabilizer subgroup under the action of $\Aut(F_r)$ in $\GL_r(\Z)$ is calculated.
\end{abstract}

\ifIJM
\maketitle
\fi

%\tableofcontents

\section{Preface}

Let $F_r$ be the free group of rank $r$. $\Aut(F_r)$ naturally acts on the set of subgroups of fixed index $n$ and gives rise to the stabilizer $\Stab_{\Aut(F_r)}(U)$ of such a subgroup $U$.
%The stabilizer $\Stab_{\Aut(F_r)}(U)$ of such a subgroup $U$ is studied in various articles, especially if $U$ is normal
%, see e.g.\ ???
Here, we study its image in $\GL_r(\Z)$ under the map $B:\Aut(F_r) \to \GL_r(\Z)$ induced by the abelianization of $F_r$ and show that for a “general subgroup” $U$ and $r$ large enough, the image $B(\GL_r(\Z))$ is a congruence subgroup of level two.

To this end we consider a certain class of subgroups of $F_r$, which we dub \emph{loop subgroups}, due to the appearance of their coset graphs (cf.\ \autoref{loopsubgroupdef}) with regard to a suitable set of generators of $F_r$, and obtain as the main result the following theorem:

{
\noindent\textbf{\autoref{sharpbound}} \itshape
For a loop subgroup  $U\le F_r$, $r\ge 3$, with at most $r-2$ looplets, we have
\[
B(\Stab_{\Aut(F_r)}(U)) = \{M \in \GL_r(\Z) \mid  v \cdot M \equiv v \pmod 2\}
\]
for a particular row vector $v\in\Z^r$.
}

%Very few subgroups of $F_r$ are actually loop subgroups. But as the definition of a loop subgroup depends on the chosen basis of the free group, our results transfer to subgroups which are loop subgroups with regard to a suitable basis of $F_r$, i.e.\ to subgroups which share their $\Aut(F_r)$ orbit with a loop subgroup (\autoref{sharpboundconjugate}).

Jan-Christoph Schlage-Puchta pointed out to us that for large $r$ in relation to the index $n=[F_r:U]$, there are only two large $\Aut(F_r)$-orbits, both of which consist of loop subgroups. This follows from a result by John D. Dixon \citep[Theorem 1]{Dixon}, whereby the probability that two random permutations generate $S_n$ or $A_n$ is high, and Robert Gilman \citep[Proof of Theorem 3]{Gilman}, whereby all generating $G$-vectors of length $r$ lie in the same orbit under the action of $\Aut(F_r)$, if $r$ is large enough in relation to $|G|$. Therefore \autoref{sharpbound} holds for the “general” subgroups.

This articles was motivated by the study of imprimitive translation surfaces, especially origamis \cite{Sch04}, where the map $B:\Aut(F_r)\to \GL_r(\Z)$ is used to obtain Veech groups of translation surfaces. An origami defines a special type of translation surface, also called square-tiled surface, coming from a covering of the torus. Each origami can be associated with a subgroup $U\le F_2$ of finite index. The image of the stabilizer group of $U$ in $\Aut(F_2)$ under $B$ intersected with $\SL_2(\Z)$ gives the Veech group of the origami.
In the case of origamis, an interesting question is which subgroups of $\SL_2(\Z)$ occur as Veech groups. There is a positive answer for many congruence subgroups (see \cite{Sch05}), and for all subgroups of the principal congruence group of level 2 that contain $-I$ (see \citep[Theorem 1.2]{ellenberg-2009}). Further results about Veech groups of coverings of $n$-gons are found in \cite{Fin2010}.
The construction of a Veech group can be generalized to subgroups of $F_r$ for a general $r\in\N$ instead of origamis. For rank 3 or higher,  there is a reason to hope that these groups will be easier to understand, as every normal subgroup of $\SL_r(\Z)$ besides $\{I\}$ and $\{I,-I\}$ is a congruence group and can be thought of as a subgroup of $\SL_r(\Z/l\Z)$, where $l$ is the congruence level of the subgroup, see e.g. \cite{Sury}.
The loop subgroups studied here can be considered as generalizations of L-origamis, which are studied in detail in \cite{HubLe}. We find that for loop subgroups, their analogs to Veech groups show very different behavior.

\ifthenelse{\isundefined{IJM}}
{
The following section begins with the definition of a loop subgroup. We obtain some lemmata about permutations in \autoref{someperm} and about generators of certain matrix groups in section \ref{linergroupsec}~and~\ref{stabsubgroups}. In \autoref{preimages}, preimages in $\Aut(F_r)$ to these generators are constructed and used in \autoref{mixedcase} in \autoref{lowerboundsec} to obtain a lower bound of the image under $B$ in $\GL_r(\Z)$ of the stabilizer subgroup of a loop subgroup. The lower bound is also the upper bound, as shown in \autoref{generalupperbound} in \autoref{upperboundsec}. This leads to the the main result in \autoref{sharpbound}. Similar reasoning works for the subgroups excluded, which are handled in \autoref{excludedcasesec}.

%so far and for index two subgroups of $F_r$, therefore we study them in \autoref{excludedcasesec} resp. \autoref{index2}.
}{
% Nothing
}

This paper
%covers part of the author’s diploma thesis \cite{Breit10} and
was written under the guidance of Gabriela Schmithüsen, who was generous with time, advice, regular proof-reading, ideas and inspiration. Fruitful discussions on the topic arouse with Myriam Finster. The work was partially supported by the Landesstiftung Baden-Württemberg 
within the project “With origamis to Teichmüller curves in moduli space”.

\section{Loop subgroups}
\label{loopsubgroupdefsec}

%\begin{definition}
%\label{loopsubgroupdef}
%A subgroup $U\le F_r$ is called a $s_1/\ldots/s_r$ \emph{loop subgroup}, $s_i\in\N$, if there is a 
%a basis $\{g_1,\ldots,g_r\}$ of $F_r$ such that $U$ has the distinct left cosets
%\[
%N \da \{U,\ g_1^1U,\ldots,g_1^{s_1-1}U,\ g_2^1U,\ldots,g_2^{s_2-1}U,\ \ldots,\ g_r^1U,\ldots,g_r^{s_r-1}U\}
%\]
%and the following edges in its left coset graph
%\begin{align*}
%U &\xrightarrow{g_i} g_i^1U && \forall i=1,\ldots,r, \\
%g_i^kU &\xrightarrow{g_i} g_i^{k+1}U && \forall i=1,\ldots,r,\, k = 1,\ldots,s_i-2, \\
%g_i^{s_i-1}U &\xrightarrow{g_i} U && \forall i=1,\ldots,r, \\
%g_j^kU &\xrightarrow{g_i} g_j^kU && \forall i,j=1,\ldots,r,\, j \ne i,\, k = 1,\ldots,s_j-1.
%\end{align*}
%\end{definition}

\begin{definition}
\label{loopsubgroupdef}
A subgroup $U\le F_r$ is called a $s_1/\ldots/s_r$ \emph{loop subgroup}, $s_i\in\N$, if there is a 
a basis $\{g_1,\ldots,g_r\}$ of $F_r$ such that 
\begin{itemize}
\item the action of $g_i$ on the left cosets of $U$ is a permutation consisting of one cycle of length $s_i$ and no other non-trivial cycles,
\item each such non-trivial cycle includes the coset $U$ and
\item these non-trivial cycles are otherwise distinct.
\end{itemize}
\end{definition}

The sequence of nodes $U,g_i^1U,\ldots, g_i^{s_i-1}U, U$ is called a \emph{loop}, where $s_i$ is the the \emph{length} of the loop. A loop is called \emph{odd} (resp. \emph{even}) if its length is odd (resp. even). A loop of length 1 is called \emph{looplet}\footnote{The German language allows to build diminutive forms of almost all nouns by appending the suffix \emph{-chen}. I take the liberty to do the same in English, as it makes the text easier and more pleasant to read than if I had named them \emph{small loops}.}.

This definition is invariant under the natural action of $\Aut(F_r)$ on subgroups of $F_r$. In the remainder of this article, $\{g_1,\ldots,g_r\}$ is a basis of $F_r$ as in the above definition. If we fix a set of generators, the lengths of the loops $s_1/\ldots/s_r$ fully determine the subgroup and allow us to speak of \emph{the} loop subgroup.

The reason for our nomenclature becomes evident if we draw the coset graph of a loop subgroup:

\begin{example}
\label{loopex}The coset graph of the $3/3/1$ loop subgroup of $F_3$, a subgroup with only odd loops and one looplet, is shown in \autoref{loopexfigure}.
\end{example}
\begin{figure}[hbt]
\begin{center}
\begin{tikzpicture}[auto]
\draw (0,0) node[state,accepting,draw,circle] (1) {$U$};
\begin{scope}[xshift=2cm]
\path (1) --
      (canvas polar cs:radius=2cm, angle=60) node[state,draw,circle] (2) {$y^1U$} --
      (canvas polar cs:radius=2cm, angle=-60) node[state,draw,circle] (3) {$y^2U$};
\draw[->] (canvas polar cs:radius=2cm, angle=160) arc(160:120:2cm) node[above] {$y$} arc(120:80:2cm);
\draw[->] (canvas polar cs:radius=2cm, angle=40) arc(40:0:2cm) node[right] {$y$} arc(0:-40:2cm);
\draw[->] (canvas polar cs:radius=2cm, angle=-80) arc(-80:-120:2cm) node[below] {$y$} arc(-120:-160:2cm);
\draw[->] (2) edge [in=45,out=75,loop] node {$z,x$} (2);
\draw[->] (3) edge [in=-75,out=-45,loop] node {$z,x$} (3);
\end{scope}
\begin{scope}[xshift=-2cm]
\path (1) --
      (canvas polar cs:radius=2cm, angle=120) node[draw,circle] (2) {$x^1U$} --
      (canvas polar cs:radius=2cm, angle=-120) node[draw,circle] (3) {$x^2U$};
\draw[->] (canvas polar cs:radius=2cm, angle=20) arc(20:60:2cm) node[above] {$x$} arc(60:100:2cm);
\draw[->] (canvas polar cs:radius=2cm, angle=140) arc(140:180:2cm) node[left] {$x$} arc(180:220:2cm);
\draw[->] (canvas polar cs:radius=2cm, angle=-100) arc(-100:-60:2cm) node[below] {$x$} arc(-60:-20:2cm);
\draw[->] (2) edge [in=105,out=135,loop] node {$z,y$} (2);
\draw[->] (3) edge [in=225,out=255,loop] node {$z,y$} (3);
\end{scope}
\draw[->] (1) edge [loop left] node {$z$} (1);
\end{tikzpicture}
\end{center}
\caption{The left coset graph of the 3/3/1 loop subgroup of $F_3$.}
\label{loopexfigure}
\end{figure}
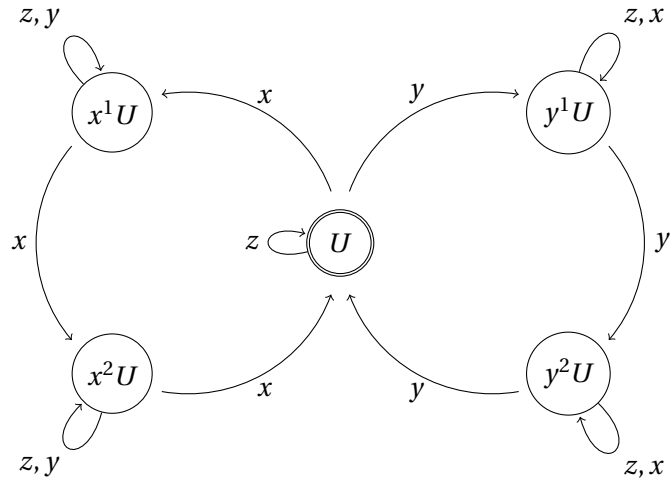

The following set of words in $F_r$ is a basis of the $s_1/\ldots/s_r$ loop subgroup of $F_r$ with respect to the basis $\{g_1,\ldots,g_r\}$
\begin{align*}
\{ g_i^{s_i}, \ i=1,\ldots,r \} \cup\ \{ g_i^{-k}g_j^{} g_i^k, \ i,j=1,\ldots,r,\, i\ne j,\, k=1,\ldots, s_i-1\}. 
\end{align*}

Before we start investigating loop subgroups, some preparational definitions and calculations are due. 

\section{Permutations of cosets}
\label{someperm}

For a subgroup $U$ of $F_r$ we formalize the coset action of $F_r$ by the homomorphism $\pi\colon  F_r  \to \Sym(N)$, $\pi(w)(vU) \da wvU$, where $N$ denotes the set of left cosets of $U$ and $\Sym(N)$ is the symmetric group thereon.

\begin{remark}
\label{piprops}
Recall the following useful properties of $\pi$ for $w\in F_r$:
\begin{enumerate}
\item $\pi(w)(U) = U \iff w\in U$.
\item $\pi(w) = \Id \iff w \in \NT(U) \da \bigcap_{v\in F_r} v^{-1}U v$.
\item For $\gamma\in\Stab_{\Aut(F_r)}(U)$,
$\pi(\gamma(w))$ is conjugate to $\pi(w)$. 
\end{enumerate}
\end{remark}

Part 3 of the previous lemma can be formulated more precisely as $\pi(\gamma(w)) = \pi(\gamma) \circ \pi(w) \circ \pi(\gamma)^{-1}$, where the permutation $\pi(\gamma)$ is defined by abuse of notation as $\pi(\gamma)(vU) \da \gamma(v)U$ for $v\in F_r$. This is well defined, as stabilizers map left cosets to left cosets. 

We will repeatedly construct automorphisms of the following form:
\begin{remark}
\label{autconstr}A map $\gamma : F_r \to F_r$ defined by
\[
\gamma(g_j) = 
\begin{cases}
w g_i, &\text{if } j = i \\
\phantom w g_j, &\text{if } j \ne i
\end{cases}
\]
is an automorphism if the generator $g_i$ does not occur in $w$, i.e.
\[
w\in \langle g_1,\ldots,g_{i-1},g_{i+1},\ldots,g_r\rangle.
\]
It stabilizes the subgroup $U$ if $w\in\NT(U)$, as then $\pi(w)=\Id$ holds, implying $\pi(\gamma(v)) = \pi(v)$ and thus $\gamma(v)\in U \iff v\in U$.
\end{remark}

The following fact about permutations will be very useful for our later calculations:
\ifIJM
\begin{remark}
\else
\begin{lemma}
\fi
\label{altperm}
Let $\sigma, \omega \in S_n$ be permutations of the form $\sigma = (1,2,\ldots,m)$ and $\omega=(1,m+1,\ldots,n)$ with $1<m<n$. Then all even permutations are in the commutator subgroup of the group generated by $\sigma$ and $\omega$, i.e.\ they can be written as a product of commutations of $\sigma$ and $\omega$: 
\[
A_n \le [\langle \sigma, \omega \rangle, \langle \sigma, \omega \rangle]
\]
where $A_n$ is the alternating group of degree $n$ and $[G,G]$ denotes the commutator subgroup of $G$.
\ifIJM
\end{remark}
\else
\end{lemma}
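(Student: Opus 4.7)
The plan is to locate a concrete $3$-cycle inside the commutator subgroup $H \da [\langle\sigma,\omega\rangle,\langle\sigma,\omega\rangle]$ and then exploit the normality of $H$ in $\langle\sigma,\omega\rangle$ to pull out enough conjugates and products of this $3$-cycle to generate all of $A_n$. The first step is to compute $[\sigma,\omega]=\sigma\omega\sigma^{-1}\omega^{-1}$ by tracking each element through the four factors: off the triple $\{1,2,m+1\}$ every element is either fixed by both $\sigma$ and $\omega$, or is shifted by one factor and restored by its inverse, while $1,2,m+1$ are cycled among themselves. The outcome is $[\sigma,\omega]=(1,\,2,\,m+1)$, so this concrete $3$-cycle lies in $H$.

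Since $H$ is normal in $\langle\sigma,\omega\rangle$, conjugating $[\sigma,\omega]$ by $\sigma^a$ yields the $3$-cycle $(a+1,a+2,m+1)$ (indices read in $\{1,\ldots,m\}$), and conjugating by $\omega^b$ yields $(\omega^b(1),2,\omega^b(m+1))$; all of these remain in $H$. A direct computation of
\[
(i,i+1,m+1)\,(i+1,i+2,m+1)=(i,i+1,i+2)
\]
collapses the joint point $m+1$ and shows that every adjacent $3$-cycle on $\{1,\ldots,m\}$ lies in $H$; the analogous computation on the $\omega$-conjugates yields the adjacent $3$-cycles on $\{m+1,\ldots,n\}$. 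The two bridging cycles $(m-1,m,m+1)$ and $(m,m+1,m+2)$ are themselves $\sigma^{m-2}$- and $(\omega\sigma^{m-1})$-conjugates of $[\sigma,\omega]$, so $H$ contains the full family $\{(i,i+1,i+2):i=1,\ldots,n-2\}$. Since these adjacent $3$-cycles are a standard generating set of $A_n$, we conclude $A_n\le H$.

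The main obstacle is pure bookkeeping in the degenerate corners: when $m=2$ the ``left'' family of $\sigma$-conjugates collapses to a pair, and when $n-m=1$ the cycle $\omega$ is a transposition. In either case the surviving family together with the bridging conjugates still covers every adjacent $3$-cycle, so the argument goes through unchanged and no additional idea is needed.
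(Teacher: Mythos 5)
Your proof is correct. It rests on the same underlying reduction as the paper's --- exhibit enough $3$-cycles inside the commutator subgroup and use the fact that $3$-cycles generate $A_n$ --- but the execution is genuinely different. The paper writes \emph{every} $3$-cycle $(1,i,j)$ explicitly as a word in $\sigma$ and $\omega$ with zero exponent sum in each generator (a four-case analysis, followed by conjugation with $\omega\circ\sigma=(1,2,\ldots,n)$ to move the base point), and then concludes membership in the commutator subgroup from the balanced exponent sums. You instead compute the single commutator $[\sigma,\omega]=(1,2,m+1)$ and exploit the normality of $[G,G]$ in $G=\langle\sigma,\omega\rangle$ to conjugate it around freely, aiming only at the adjacent $3$-cycles $(i,i+1,i+2)$, $i=1,\ldots,n-2$. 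This buys a shorter argument with no case analysis and no exponent-sum bookkeeping; what the paper's version buys is a fully explicit balanced word for an arbitrary $3$-cycle, which it later displays in a worked example (though only the existence statement is ever used downstream). One cosmetic caveat: the identity $(i,i+1,m+1)\,(i+1,i+2,m+1)=(i,i+1,i+2)$ holds under the right-to-left composition convention; under the opposite convention this product is $(i,i+2,m+1)$ and the factors must be reversed (similarly, your product of consecutive $\omega$-conjugates comes out as the \emph{inverse} of an adjacent $3$-cycle). Either way the element $(i,i+1,i+2)$ lies in the subgroup generated by your conjugates, so nothing breaks, and your treatment of the degenerate cases $m=2$ and $n-m=1$ is adequate.
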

\fi

\ifIJM\else
\begin{proof}
The alternating group $A_n$, $n\ge 3$, is generated by all three-cycles in $S_n$. We first generate all three-cycles which do not fix the 1. These are cycles of the form $(1,i,j)$. There are four cases:
\begin{itemize}
\item $1<j\le m$ and $m < i \le n$. In this case, we take the inverse and enter the next case.
\item $1<i\le m$ and $m < j \le n$. We write the cycle using commutators of $\omega$ and $\sigma$:
\[
(1,i,j) = \sigma^{i-1} \circ \omega^{j-m} \circ \sigma^{-(i-1)} \circ \omega^{-(j-m)}
\]
\item $1<i\le m$ and $1< j \le m$. We can reduce this case to the previous by writing:
\[
(1,i,j) = (1,j,m+1)^{-1}\circ (1,i,m+1)
\]
\item $m<i\le n$ and $m< j \le n$. This case works analogously to the previous case:
\[
(1,i,j) = (1,2,j)\circ (1,2,i)^{-1}
\]
\end{itemize}

Observe that $\omega \circ \sigma = (1,2,\ldots,n)$.
If we now have a general three-cycle $(k,i,j)$, it is conjugate to a three-cycle that moves the 1:
\[
(k,i,j) = (\omega\circ\sigma)^{k-1} \circ (1,i-(k-1),j-(k-1)) \circ (\omega\circ\sigma)^{-(k-1)}
\]
So we can write any even permutation in $S_n$ as a product of $\omega$’s and $\sigma$’s, such that for either of the two generators, the sum of its occurrences, counting negative powers negatively, is zero.
\end{proof}
\fi

\section{The linear group and the principal congruence subgroup}
\label{linergroupsec}

The elementary matrix $X_{ij} \in \GL_r(\Z)$, $i\ne j$ is the matrix with ones on the diagonal, one additional one in the $i$-th row and $j$-th column and zeroes everywhere else. The elementary matrices generate $\SL_r(\Z)$. Another generating set is 
\[
\{X_{ik}, k\ne i\} \cup \{ X_{ji}, j\ne i\}
\]
for a fixed $i$. This follows from the relations $[X_{ji},X_{ik}] = X_{jk}$ for $j\ne k$ \citep[Theorem 4-3.2]{Sury}.

The matrix $T_i \da \Diag(1,\ldots,1,-1,1,\ldots,1)$ is defined as the identity matrix with the exception of one $-1$ on the diagonal in the $i$-th row. Together with $T_1$ either of the generating sets above generate the whole group $\GL_r(\Z)$.

\begin{remark}
\label{gamma2gens}
For the principal congruence subgroup of level two
\[
\Gamma_2 = \{A \in \GL_r(\Z) \mid A\equiv \Id \pmod 2\},
\]
a similar generating set 
\[
\{X_{ij}^2,\, i\ne j\}  \cup \{T_i, i=1,\ldots,r\}
\]
consisting of squares of elementary matrices and diagonal matrices which are the identity matrix with the exception of one $-1$ on the diagonal can be given. This can be proven by transforming a matrix $M\in\Gamma_2$ to the identity matrix using the row- and column-operations described by the alleged generators.
\end{remark}

\begin{corollary}
\label{gamma2gensalt}
The set
\[
\{X_{ik}, k\ne i\} \cup \{ X_{ji}^2, j\ne i\} \cup \{T_j, j=1,\ldots,r\}
\]
for some fixed $i\in\{1,\ldots,r\}$ generates a superset of $\Gamma_2$.
\end{corollary}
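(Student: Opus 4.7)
The plan is to reduce the claim to \autoref{gamma2gens}, which identifies $\Gamma_2$ as the group generated by $\{X_{pq}^2\mid p\ne q\}\cup\{T_j\mid j=1,\ldots,r\}$. Since the $T_j$ already appear in the proposed generating set, it suffices to show that every $X_{pq}^2$ with $p\ne q$ lies in the subgroup generated by the set in the statement.

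First I would dispose of the cases where one of the two indices equals the fixed $i$: the matrices $X_{ik}^2$ with $k\ne i$ are literally squares of the given generators $X_{ik}$, while the matrices $X_{ji}^2$ with $j\ne i$ appear in the list directly.

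For the remaining case $X_{jk}^2$ with both $j,k$ distinct from $i$ (and $j\ne k$), I would refine the commutator relation $[X_{ji},X_{ik}]=X_{jk}$ that was recalled just before the corollary. A brief computation with the rank-one matrices $e_{ji}$ (using $e_{ji}^2=e_{ik}^2=e_{ik}e_{ji}=e_{jk}e_{ji}=e_{jk}e_{ik}=0$ when $i,j,k$ are distinct) shows the stronger identity $[X_{ji}^a,X_{ik}^b]=X_{jk}^{ab}$ for all integers $a,b$; specialising to $a=2$, $b=1$ gives $[X_{ji}^2,X_{ik}]=X_{jk}^2$, and both factors lie in our set.

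There is no real obstacle here; the argument is essentially the same reduction that was used to obtain $\SL_r(\Z)$ from $\{X_{ji},X_{ik}\}$, modified so as to keep one factor squared and thereby remain inside $\Gamma_2$. Note that the inclusion is strict, since $X_{ik}\notin\Gamma_2$, which is precisely why the corollary only asserts a \emph{superset} of $\Gamma_2$ rather than equality.
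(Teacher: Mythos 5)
Your argument is correct and is essentially the paper's own: the paper justifies the corollary with the single relation $X_{jk}^2=[X_{ji}^2,X_{ik}]$ for distinct $i,j,k$, which is exactly your key step, and your handling of the cases involving the index $i$ and the verification of the commutator identity just make the same reduction explicit.
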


This follows from the relation $X_{jk}^2 = [X_{ji}^2,X_{ik}]$ for distinct $i,j,k$.

\section{Stabilizer subgroups in \texorpdfstring{$\GL_r(\Z/2\Z)$}{GLr(Z/2Z)}}
\label{stabsubgroups}

When we inspect the loop subgroup with even loops, we will come across the group of integral matrices with odd column sums. It contains the principal congruence subgroup of level 2, hence it is completely determined by its image in $\GL_r(\Z/2\Z)$:
\[
S(\ev) \da \{ M\in \GL_r(\Z/2\Z) \mid \ev \cdot M = \ev \}
\]
where $\ev = (1,\ldots,1) \in (\Z/2\Z)^r$ is the row vector, all of whose entries are one. More generally, we will be interested in the stabilizer subgroup of a row vector $v=(v_1,\ldots,v_r)\in (\Z/2\Z)^r$ under the action of right multiplication:
\[
S(v) \da \{ M\in \GL_r(\Z/2\Z) \mid v \cdot M = v \}
\]

\begin{lemma}
\label{stabgens}The group $S(\ev)$ is generated by “double elementary matrices” of the form $X_{ik} X_{jk}$, $i,j,k\in\{1,\ldots,r\}$ pairwise distinct, having ones on the diagonal and in two additional positions in the same column, and zeros everywhere else.

More general, $S(v)$ is generated by the elementary matrices $X_{ij}$ for $i\ne j$ and $v_i=0$ and the double elementary matrices $X_{ik}X_{jk}$ for $v_i=v_j=1$, $i,j,k\in\{1,\ldots,r\}$ pairwise distinct.
\end{lemma}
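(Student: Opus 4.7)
The plan is to establish the two inclusions. The easy direction $\langle\text{generators}\rangle\subseteq S(v)$ is a direct computation: right-multiplying $v$ by $X_{ij}$ alters only the $j$-th entry, by adding $v_i$, so $X_{ij}\in S(v)$ precisely when $v_i=0$; right-multiplying by $X_{ik}X_{jk}$ alters only the $k$-th entry, by $v_i+v_j \pmod 2$, which vanishes when $v_i=v_j=1$. The first claim, about $S(\ev)$, falls out as the special case $v=\ev$, where every $v_i=1$ and only the double generators are available.

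For the reverse inclusion I would reduce an arbitrary $M\in S(v)$ to the identity using right-multiplications by generators, which amount to column operations on $M$. Right-multiplication by $X_{ij}$ performs $C_j\mapsto C_j+C_i$, and by $X_{ik}X_{jk}$ performs $C_k\mapsto C_k+C_i+C_j$. By composing these one-step operations, any column operation of the form $C_k\mapsto C_k+\sum_{i\ne k}a_iC_i$ can be realized provided the parity constraint $\sum_i a_iv_i\equiv 0\pmod 2$ is met: the contributions from indices $i$ with $v_i=0$ are executed one at a time via single generators, while those from indices with $v_i=1$---necessarily even in number by the parity constraint---are paired off into double-generator operations. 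A symmetric analysis of left-multiplications yields the admissible row operations.

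The reduction itself would proceed by induction on $r$. At each step the goal is to bring $M$ into the block form $\left(\begin{smallmatrix}1 & 0 \\ 0 & M'\end{smallmatrix}\right)$ with $M'\in\GL_{r-1}(\Z/2\Z)$; because the ambient matrix stays in $S(v)$ throughout, the block $M'$ automatically lies in the analogous smaller stabilizer $S((v_2,\ldots,v_r))$, and the induction hypothesis then writes $M'$ as a product of smaller generators, each of which lifts to one of our generators of $S(v)$. To reach the block form one would clear the first column to $e_1$ and the first row to $(1,0,\ldots,0)$, combining admissible column operations with symmetric row operations.

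The hard part will be the clearing step, since the parity constraint restricts which column (resp.\ row) combinations may be added. For instance, reducing $C_1$ to $e_1$ in one column operation requires $C_1+e_1$ to lie in the span of $\{C_2,\ldots,C_r\}\cap\ker v$, which pins down the condition $(M^{-1})_{11}=1$; when this fails, a preliminary operation on $M$ is necessary. A careful case analysis is therefore needed, split by the value of $v_1$ and the size of $I_1:=\{i:v_i=1\}$. The boundary cases $v=0$ (where $S(v)=\GL_r(\Z/2\Z)$ is generated by all $X_{ij}$) and $|I_1|=1$ (where $S(v)$ is an affine-type group $(\Z/2\Z)^{r-1}\rtimes\GL_{r-1}(\Z/2\Z)$, generated by the available single generators alone) can be handled directly, so the main induction is run under the assumption $|I_1|\ge 2$.
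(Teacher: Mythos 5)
Your easy direction and your characterization of the realizable column operations are both correct, and the broad shape of the hard direction (reduce $M$ to the identity by elementary operations that are themselves in $S(v)$) is the same as the paper's, which does a single left-to-right sweep by row operations rather than a block induction. But the proposal stops exactly where the content is. The step you defer --- ``when this fails, a preliminary operation on $M$ is necessary'' --- is the one place the paper needs an actual idea: when $v$ has exactly two ones, say $v_i=v_j=1$, and the column being processed (the $i$-th) has a single $1$, sitting in row $j$, then no admissible single or paired row addition can move a $1$ onto the diagonal, since the only legal way to add row $j$ to row $i$ is paired with another row $k$ satisfying $v_k=1$, and no such $k$ exists. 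The paper resolves this by exhibiting the transposition of rows $i$ and $j$ as $(X_{ik}X_{jk}\cdot X_{ki}\cdot X_{kj})^2$ for an auxiliary index $k$ with $v_k=0$. Without this (or an equivalent device), the ``careful case analysis'' you promise does not close.

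Second, the induction on $r$ you set up has a genuine base-case failure. The residual stabilizer $S((v_2,\dots,v_r))$ retains every $1$ of $v$ except possibly $v_1$, so whenever two ones survive to the last two coordinates --- in particular always for $v=\ev$ --- the induction bottoms out at $S((1,1))\le\GL_2(\Z/2\Z)$, a group of order $2$ whose prescribed generating set is empty (double elementary matrices need three pairwise distinct indices). The statement being inducted on is false there; the swap generating $S((1,1))$ is only expressible using a third index of the ambient matrix, which your block reduction has discarded. You would need to stop the induction at size $3$, or give up the block form as the paper does: its column-by-column sweep keeps the full index set available at every stage, and also never has to clear the first row separately --- a step which, under your parity constraints, is itself not routine when $v_1=1$.
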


Note that the case $v=0$ is covered by the lemma, as $S(0) = \GL_r(\Z/2\Z)$ is generated by all the elementary matrices $X_{ij}$, $i\ne j$. If the vector $v$ is a unit vector, i.e.\ precisely one entry is $1$, the generating set does indeed not contain any double elementary matrices.

\begin{proof}
Direct computation shows that the alleged generators are in $S(v)$.
%What does the defining equation $v\cdot M = v$ tell us? On the one hand, any row $i$ of the matrix where the corresponding entry $v_i$ in the row vector is zero does not affect the equation at all and its entries are irrelevant. On the other hand, the sum of all relevant entries in a column $j$ is odd if and only if $v_j=1$. Therefore, the alleged generators are indeed contained in $S(v)$.

Recall that the multiplication from the left of an elementary matrix $X_{ij}$ has the effect of adding the $j$-th row on the $i$-th row and the multiplication from the left of a double elementary matrix $X_{ik}X_{jk}$ has the effect of adding the $k$-th row simultaneously on the $i$-th and the $j$-th row. 

We will now transform a matrix in $S(v)$ into the identity matrix by multiplying the alleged generators from the left. To do so, we suppose that the first $i-1$ columns are already that of the identity matrix and treat column $i\in\{1,\ldots,r\}$ as follows:

\begin{enumerate}
\item Ensure that there is a 1 on the diagonal. If there is none, there still must be at least one 1 in the column, otherwise the matrix would not be regular. There even must be a 1 in a row $j$ with $j > i$, as otherwise the current column would be a linear combination of the columns to the left of it, which are, due to our transformation, the unit vectors $e_1,\ldots,e_{i-1}$.

Now add this row $j$ on the $i$-th row (multiplication with $X_{ij}$).  If $v_i=1$, then, to be allowed to do that, also add the $j$-th row on any other row $k$ with $v_k=1$ (multiplication with $X_{ij}X_{kj}$). This step does not alter the previous columns, as the row $j$ has zeros there.

If there is no such other row $k$, that is, if there are exactly two ones in the vector $v$, namely $v_i=v_j=1$, and if the current column has only one 1 in the $j$-th row, some additional shuffling is necessary. Let $k\ne i,j$ be any other row. This implies $v_k=0$. The matrix
\[
(X_{ik}X_{jk}\cdot X_{ki}\cdot X_{kj})^2
\]
is actually the permutation matrix that swaps the $i$-th and $j$-th row, and is here written in terms of the given generators. Multiplying this matrix from the left, we move the 1 to the right spot, while again not altering the previous columns.

\item Eliminate all ones that are not on the diagonal. Ones in rows $j$ with $v_j=0$ are eliminated directly with $X_{ji}$. Ones in rows $j$ and $k$ with $v_j=v_k=1$ can only be eliminated pairwise with $X_{ji}X_{ki}$. But in any case there is an even number of them: Either $v_i=0$. Then we know from the equation $v\cdot M = v$ that there is an even number of ones to be eliminated. Or $v_i=1$, then there is an odd number of ones in this column. $v_i = 1$ means that we count the one on the diagonal, which we want to retain, leaving us with an even number of ones to eliminate.
\end{enumerate}
So any matrix in $S(v)$ can be transformed into the identity matrix using the given generators, thus they indeed generate all of $S(v)$.
\end{proof}

%\begin{remark}
%\label{max}
%Any subgroup $S(v)\le\GL_r(\Z/2\Z)$, $r\ge 3$, with $v\in (\Z/2\Z)^r\setminus\{0\}$ is maximal, that is, there is no proper subgroup of $\GL_r(\Z/2\Z)$ that is a proper supergroup of $S(v)$.
%\end{remark}
%
\section{Preimages}
\label{preimages}

After these calculations we will start investigating $B(\Stab_{\Aut(F_r)}(U))$ for a loop subgroup $U$. Recall that the map $B$ is the map naturally induced by the abelianization $F_r \to \Z^r$, which can be given explicitly with regard to a basis $\{g_1,\ldots,g_r\}$ of $F_r$:

\begin{definition}
\label{defB}
The map $B : \Aut(F_r) \to \GL_r(\Z)$ is defined by 
\[
\gamma \mapsto \big(\#_{g_i}\,\gamma(g_j)\big)_{i,j=1\ldots r}\,,
\]
where  $\#_{g_i} : F_r \to \Z$ is the map that counts the $i$-th generator, sending $g_i \mapsto 1$ and $g_j \mapsto 0$ for $j\ne i$.
\end{definition}

To find a lower bound of the group $B(\Stab_{\Aut(F_r)}(U))$, the following three lemmata identify sufficient conditions for elementary matrices, squared elementary matrices or double elementary matrices to be in this group.

\begin{lemma}
\label{oddcasepreimage}
Let $U\le F_r$, $r\ge 3$, be the $s_1/\ldots/s_r$-loop subgroup, $i,j\in\{1,\ldots,r\}$ with $i\ne j$ and $s_i$ odd. If $s_i =1$ or there exists $k\in\{1,\ldots,r\}$, $k\ne i,j$, with $s_k>1$, then 
\[
X_{ij} \in B(\Stab_{\Aut(F_r)}(U)).
\]
\end{lemma}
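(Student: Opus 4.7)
The plan is to use \autoref{autconstr} to construct an explicit element $\gamma \in \Stab_{\Aut(F_r)}(U)$ of the form $\gamma(g_j) = w g_j$ and $\gamma(g_l) = g_l$ for all $l \ne j$, for a carefully chosen word $w \in F_r$. It suffices to find such a $w$ that does not involve $g_j$, that lies in $\NT(U)$, and whose abelianization is $\#_{g_l}(w) = \delta_{l i}$; the resulting $\gamma$ will then satisfy $B(\gamma) = X_{ij}$ by \autoref{defB}, since the column of $\gamma(g_j)$ becomes $\delta_{kj} + \delta_{ki}$ while every other column is $\delta_{kl}$. If $s_i = 1$, the permutation $\pi(g_i)$ is trivial, so $g_i \in \NT(U)$ by \autoref{piprops} and $w \da g_i$ does the job immediately.

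If instead $s_i > 1$, the hypothesis supplies an index $k \ne i, j$ with $s_k > 1$. By the loop-subgroup definition the permutations $\sigma \da \pi(g_i)$ and $\omega \da \pi(g_k)$ are cycles of lengths $s_i$ and $s_k$ that share exactly the coset $U$. Restricting them to the union $S$ of their supports and identifying $U$ with $1$, the remaining $g_i$-orbit with $\{2,\ldots,s_i\}$ and the remaining $g_k$-orbit with $\{s_i+1,\ldots,s_i+s_k-1\}$, we land precisely in the setting of \autoref{altperm} with $m = s_i \ge 2$ and $n = s_i + s_k - 1 > m$. Since $s_i$ is odd, the cycle $\sigma^{-1}$ is an even permutation of $S$ and hence lies in the commutator subgroup of $\langle \sigma, \omega\rangle$. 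Lifting the resulting product-of-commutators expression from the symbols $\sigma, \omega$ to the corresponding generators $g_i, g_k$ of $F_r$ yields a word $v \in \langle g_i, g_k\rangle$ whose abelianization is zero (each commutator vanishes there) and which satisfies $\pi(v) = \sigma^{-1}$ on $S$; cosets outside $S$ are fixed by both $g_i$ and $g_k$, so this equality in fact holds on every coset.

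Setting $w \da g_i v$, the word $w$ does not involve $g_j$, has abelianization $\#_{g_l}(w) = \delta_{l i}$, and satisfies $\pi(w) = \sigma \sigma^{-1} = \Id$, so $w \in \NT(U)$ by \autoref{piprops}. The automorphism $\gamma$ built from this $w$ via \autoref{autconstr} is then the desired preimage of $X_{ij}$ in $\Stab_{\Aut(F_r)}(U)$.

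The chief obstacle is producing a word $w$ with the prescribed abelianization that nevertheless acts trivially on every coset; the idea is to exploit the hypothesized second long loop at $g_k$ to cancel the nontrivial permutation $\sigma = \pi(g_i)$ via \autoref{altperm}, which also explains why the hypothesis splits into the two cases $s_i = 1$ and the existence of a third long loop disjoint from $j$. Parity of $s_i$ enters essentially, because the lemma cancels only \emph{even} permutations; cycles of even length $s_i$ are odd and will be handled separately elsewhere.
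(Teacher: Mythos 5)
Your proposal is correct and follows essentially the same route as the paper: the same \autoref{autconstr}-style automorphism sending $g_j \mapsto w g_j$ with $w$ abelianizing to $e_i$, the trivial choice $w = g_i$ when $s_i = 1$, and otherwise the restriction of $\pi(g_i), \pi(g_k)$ to the union of the two loops so that \autoref{altperm} supplies a commutator word inverting the even cycle $\pi(g_i)$. The only differences are cosmetic (you write $w = g_i v$ where the paper writes $v g_i$ with $v \in [F_r,F_r]$), and you in fact spell out a couple of details the paper leaves implicit, such as why the permutation identity extends from the supports to all cosets.
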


\ifIJM\else
\begin{example}
For the 3/3/1 loop subgroup seen in \autoref{loopex}, the following preimages to the elementary matrices $X_{31}$, $X_{32}$, $X_{13}$ and $X_{23}$ are constructed:
\begin{align*}
\gamma_{31}(x,y,z) &= (z\cdot x,y,z) \\
\gamma_{32}(x,y,z) &= (x,z\cdot y,z) \\
\gamma_{13}(x,y,z) &= (x,y, yxy^{-1}xyx^{-2}y^{-1} \cdot x \cdot z) \\
\gamma_{23}(x,y,z) &= (x,y, \underbrace{xyx^{-1}yxy^{-2}x^{-1}}_{\in [\langle x,y\rangle,\langle x,y\rangle]} \cdot y \cdot z).
\end{align*}
\end{example}
\fi

\ifIJM
\begin{proof}
\else
\begin{proof}[\ifIJM Proof\fi of \autoref{oddcasepreimage}]
\fi

We construct a preimage of the elementary matrix $X_{ij}$ as a map of the form
\[
\gamma_{ij}(g_k) =
\begin{cases}
w\cdot g_i \cdot g_j, & k=j \\
\phantom{w\cdot g_i \cdot {}} g_k, & k\ne j
\end{cases}
\]
with a suitable $w\in F_r$. By \autoref{autconstr}, this is an automorphism that stabilizes $U$ if the gene\-rator $g_j$ does not occur in $w$ and $\pi(w\cdot g_i) = \Id$.

To have $B(\gamma_{ij}) = X_{ij}$, the number of occurrences of all generators in $w$ must be zero each, that is $w\in [F_r,F_r]$. If the $i$-th loop actually is a looplet, we have $\pi(g_i)=\Id$ and we can choose $w=\Id$.

If $s_i>1$, this is where our preliminary calculations about permutations kick in. $g_k$ is another generator whose loop is not a looplet, i.e.\ $s_k>1$. If we only consider the set of cosets of $U$ that are on the loops $i$ or $k$,
\[
N_{ik} \da \{ U, g_i^1 U, \ldots, g_i^{s_i-1}U, \, g_k^1 U,\ldots, g_k^{s_k-1}U\},
\]
we can interpret the permutations $\pi(g_i)$ and $\pi(g_k)$ as elements of $\Sym(N_{ik})$. We are now in the situation of \autoref{altperm} with $\sigma = \pi(g_i)$ and $\omega = \pi(g_k)$, hence $A_{N_{ik}} \le \pi([\langle g_i,g_k\rangle ,\langle g_i,g_k\rangle])$. Since $\pi(g_i)$ is a cycle of odd length $s_i$, it is itself an even permutation, thus $\pi(g_i)\in A_{N_{ik}}$. This proves the existence of a word $w\in [\langle g_i,g_k\rangle ,\langle g_i,g_k\rangle]$ with $\pi(w) = \pi(g_i)^{-1}$, that is $\pi(w\cdot g_i) = \Id$. With this word, the automorphism $\gamma_{ij}$ is indeed in $\Stab_{\Aut(F_r)}(U)$ and a preimage of $X_{ij}$.
\end{proof}

\begin{lemma}
\label{squaredcasepreimage}
Let $U\le F_r$, $r\ge 3$, be an $s_1/\ldots/s_r$-loop subgroup and $i,j,\in\{1,\ldots,r\}$ with $i\ne j$. If $s_i = 1$ or there exists $k\in\{1,\ldots,r\}$, $k\ne i,j$, with $s_k>1$, then
\[
X_{ij}^2 \in B(\Stab_{\Aut(F_r)}(U)).
\]
\end{lemma}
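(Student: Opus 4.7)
The proof should run parallel to the proof of \autoref{oddcasepreimage}, with one crucial relaxation: because we are now targeting $X_{ij}^2$ rather than $X_{ij}$, the permutation we need to neutralise is $\pi(g_i^2)$, which is automatically an even permutation regardless of the parity of $s_i$. This is exactly what lets us drop the oddness hypothesis on $s_i$.

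Concretely, the plan is to construct a preimage of the form
\[
\gamma_{ij}(g_k) =
\begin{cases}
w \cdot g_i^2 \cdot g_j, & k = j, \\
\phantom{w \cdot g_i^2 \cdot {}} g_k, & k \ne j,
\end{cases}
\]
for a suitable word $w$. By \autoref{autconstr}, $\gamma_{ij}$ is an automorphism stabilising $U$ provided $g_j$ does not appear in $w$ and $\pi(w \cdot g_i^2) = \Id$. In order to have $B(\gamma_{ij}) = X_{ij}^2$, the total exponent of each generator in $w$ must vanish, i.e.\ $w \in [F_r, F_r]$.

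If $s_i = 1$, then $\pi(g_i) = \Id$, so $\pi(g_i^2) = \Id$ and $w = \Id$ does the job. Otherwise $s_i > 1$, and by hypothesis there is some $k \ne i,j$ with $s_k > 1$. Restricting to the set of cosets $N_{ik}$ lying on the loops $i$ and $k$, as in the proof of \autoref{oddcasepreimage}, the permutations $\sigma = \pi(g_i)$ and $\omega = \pi(g_k)$ satisfy the hypotheses of \autoref{altperm}, hence
\[
A_{N_{ik}} \le \pi\bigl([\langle g_i, g_k \rangle, \langle g_i, g_k \rangle]\bigr).
\]
Since $\pi(g_i^{-2})$ is a square of a permutation and therefore even, it lies in $A_{N_{ik}}$, so there exists $w \in [\langle g_i, g_k \rangle, \langle g_i, g_k \rangle]$ with $\pi(w) = \pi(g_i^{-2})$, giving $\pi(w \cdot g_i^2) = \Id$. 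Moreover $w \in \langle g_i, g_k \rangle$ with $j \notin \{i,k\}$, so $g_j$ does not occur in $w$, and $w$ lies in $[F_r,F_r]$ as required.

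The main (mild) obstacle is ensuring that the word $w$ can indeed be chosen inside $\langle g_i, g_k \rangle$ and inside its commutator subgroup at the same time; but this is precisely the content of \autoref{altperm} applied to the two-loop restriction, and the rest is bookkeeping on exponent sums. With $w$ chosen as above, $\gamma_{ij} \in \Stab_{\Aut(F_r)}(U)$ is a preimage of $X_{ij}^2$, which proves the claim.
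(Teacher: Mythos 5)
Your proposal is correct and follows essentially the same route as the paper: the same modification of the construction from \autoref{oddcasepreimage}, replacing $g_i$ by $g_i^2$ so that the permutation to be cancelled, $\pi(g_i)^2$, is automatically even, and then invoking \autoref{altperm} on the restriction to the two loops $i$ and $k$. Your write-up is somewhat more explicit than the paper's (which compresses the looplet case and the exponent-sum bookkeeping into a reference back to the earlier proof), but the argument is the same.
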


\begin{proof}
Just as in the proof for \autoref{oddcasepreimage}, we will find preimages of squares of elementary matrices $X_{ij}^2$ as maps $\gamma_{ij}^{(2)}$ of the form 
\[
\gamma_{ij}^{(2)}(g_k) =
\begin{cases}
w\cdot g_i^2 \cdot g_j, & k=j \\
\phantom{w\cdot g_i^2 \cdot {}}g_k, & k\ne j.
\end{cases}
\]
Note that while $\pi(g_i)$ may not be an even permutation, $\pi(g_i^2) = \pi(g_i)^2 \in A_N$ certainly is. Therefore \autoref{altperm} provides us with a suitable $w\in F_r$ so that $\gamma_{ij}^{(2)}\in \Stab_{\Aut(F_r)}(U)$ by \autoref{autconstr}.
\end{proof}

\begin{lemma}
\label{evencasepreimage}
Let $U\le F_r$, $r\ge 3$, be an $s_1/\ldots/s_r$-loop subgroup and $i,j,k\in\{1,\ldots,r\}$ distinct with $s_i$ and $s_j$ even. Then 
\[
X_{ik}X_{jk} \in B(\Stab_{\Aut(F_r)}(U)).
\]
\end{lemma}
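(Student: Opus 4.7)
The plan is to imitate the constructions in \autoref{oddcasepreimage} and \autoref{squaredcasepreimage}, producing an automorphism $\gamma\in\Stab_{\Aut(F_r)}(U)$ of the form
\[
\gamma(g_\ell) = \begin{cases} w\cdot g_i g_j \cdot g_k, & \ell=k, \\ \phantom{w\cdot g_i g_j\cdot{}}g_\ell, & \ell\ne k, \end{cases}
\]
for a suitable $w\in\langle g_i, g_j\rangle$ with trivial abelianization. If such a $w$ can be found, then $B(\gamma)$ has $k$-th column $e_i+e_j+e_k$ and is the identity elsewhere, so $B(\gamma) = X_{ik}X_{jk}$, as desired.

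By \autoref{autconstr}, this map is a well-defined automorphism stabilizing $U$ provided that $g_k$ does not occur in $w\cdot g_i g_j$ (automatic, since $w\in\langle g_i,g_j\rangle$ and $i,j,k$ are pairwise distinct) and that $\pi(w\cdot g_i g_j) = \Id$, equivalently $\pi(w) = \pi(g_j)^{-1}\pi(g_i)^{-1}$. The task therefore reduces to exhibiting a $w$ inside the commutator subgroup $[\langle g_i,g_j\rangle, \langle g_i,g_j\rangle]$ (so that its abelianization vanishes) which realizes this prescribed permutation.

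This is where the evenness hypothesis enters: the cycles $\pi(g_i)$ and $\pi(g_j)$ have even lengths $s_i,s_j \ge 2$, hence each is itself an \emph{odd} permutation, so that the product $\pi(g_j)^{-1}\pi(g_i)^{-1}$ is even. Restricted to the set of cosets
\[
N_{ij} \da \{U,\, g_iU,\ldots,g_i^{s_i-1}U,\, g_jU,\ldots,g_j^{s_j-1}U\},
\]
$\pi(g_i)$ and $\pi(g_j)$ are cycles meeting only in $U$; after relabeling they satisfy the hypotheses of \autoref{altperm} with $m=s_i$ and $n=s_i+s_j-1$ (and indeed $1<m<n$ since both $s_i, s_j\ge 2$). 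That lemma then supplies the desired $w\in [\langle g_i,g_j\rangle, \langle g_i,g_j\rangle]$ with $\pi(w) = \pi(g_j)^{-1}\pi(g_i)^{-1}$, finishing the construction.

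The main subtlety, and the reason this case is worth isolating as a separate lemma, is precisely the parity bookkeeping: neither $\pi(g_i)$ nor $\pi(g_j)$ is individually even, so one cannot realize $X_{ik}$ or $X_{jk}$ on its own by an analogous construction as in \autoref{oddcasepreimage}. Only the combined double elementary matrix $X_{ik}X_{jk}$ corresponds to an even permutation that lies in $\pi([\langle g_i,g_j\rangle, \langle g_i,g_j\rangle])$, which explains why this is exactly the shape that \autoref{stabgens} forces on the generators of the relevant stabilizer in the even loop case.
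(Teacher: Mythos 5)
Your proposal is correct and follows essentially the same route as the paper: the same automorphism $\gamma(g_k)=w\cdot g_ig_j\cdot g_k$, the same observation that the product of the two odd permutations $\pi(g_i)$ and $\pi(g_j)$ is even, and the same appeal to \autoref{altperm} and \autoref{autconstr} to produce $w\in[\langle g_i,g_j\rangle,\langle g_i,g_j\rangle]$ with $\pi(w)=\pi(g_ig_j)^{-1}$. Your explicit verification that $1<m<n$ holds (since $s_i,s_j\ge 2$) and the closing remark on why only the double elementary matrix is reachable are welcome additions the paper leaves implicit.
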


\begin{proof}
We again vary the construction of the previous two lemmata. Here we find automorphisms $\gamma_{ijk}$ of the form 
\[
\gamma_{ijk}(g_l) =
\begin{cases}
w\cdot g_i g_j \cdot g_k, & l=k \\
\phantom{w\cdot g_i g_j \cdot {}}g_l, & l\ne k.
\end{cases}
\]
such that $\gamma_{ijk}$ is a preimage of $X_{ik}X_{jk}$. We know that the cycles $\pi(g_i)$ and $\pi(g_j)$ are both of even length, thus odd permutations. Therefore, their pro\-duct $\pi(g_ig_j)\in A_N$ and with \autoref{altperm} we can find a suitable $w\in [\langle g_i,g_j\rangle, \langle g_i,g_j\rangle]$, such that $\gamma_{ijk} \in \Stab_{\Aut(F_r)}(U)$ by \autoref{autconstr}.
\end{proof}

\section{The lower bound}
\label{lowerboundsec}

We will first show $\Gamma_2 \le B(\Stab_{\Aut(F_r)}(U))$, i.e.\ $\Stab_{\Aut(F_r)}(U)$ is a congruence subgroup of level 2, and then consider the problem in $\GL_r(\Z/2\Z)$.

\begin{lemma}
\label{level}For a loop subgroup $U\le F_r$, $r\ge 3$, with at most $r-2$ looplets, we have
\[
\Gamma_2 \le B(\Stab_{\Aut(F_r)}(U)).
\]
\end{lemma}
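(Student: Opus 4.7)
The plan is to show $\Gamma_2 \le B(\Stab_{\Aut(F_r)}(U))$ by producing a preimage in $\Stab_{\Aut(F_r)}(U)$ for each generator listed in Remark \ref{gamma2gens}, i.e.\ for every $T_j$ and every $X_{ij}^2$ with $i \ne j$.

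For the sign matrices $T_j$ I would use the Nielsen automorphism $\sigma_j\colon g_j \mapsto g_j^{-1}$, $g_k \mapsto g_k$ for $k \ne j$, whose abelianization is visibly $T_j$. Since this automorphism is not of the form handled by Remark \ref{autconstr}, one has to verify $\sigma_j(U) = U$ directly, by applying $\sigma_j$ to the basis of $U$ listed after Definition \ref{loopsubgroupdef}. The generators $g_j^{s_j}$ and $g_i^{-k} g_j g_i^k$ (with $i \ne j$) are sent to their own inverses, so they remain in $U$; generators not involving $g_j$ are fixed; and the remaining case $g_j^{-k} g_l g_j^k \mapsto g_j^k g_l g_j^{-k}$ (with $l \ne j$, $1 \le k \le s_j - 1$) reduces to the previous cases via the identity $g_j^k g_l g_j^{-k} = g_j^{s_j} \cdot (g_j^{-(s_j-k)} g_l g_j^{s_j-k}) \cdot g_j^{-s_j}$, which lies in $U$ because $g_j^{\pm s_j}$ and $g_j^{-(s_j - k)} g_l g_j^{s_j - k}$ are all generators of $U$.

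For the squared elementary matrices, Lemma \ref{squaredcasepreimage} already places $X_{ij}^2$ in $B(\Stab_{\Aut(F_r)}(U))$ whenever its hypothesis holds, i.e.\ whenever $s_i = 1$ or there is a non-looplet index $k \ne i, j$. Since the hypothesis of at most $r-2$ looplets forces at least two non-looplets, the only pair not covered is $(i,j)$ in which $i$ and $j$ are the only two non-looplets. In that exceptional case pick any $k \in \{1, \ldots, r\} \setminus \{i, j\}$ (available because $r \ge 3$); this $k$ is necessarily a looplet, so $s_k = 1$. Lemma \ref{squaredcasepreimage} applied to $(i, k)$, using $j$ as the auxiliary non-looplet, gives $X_{ik}^2 \in B(\Stab_{\Aut(F_r)}(U))$, and Lemma \ref{oddcasepreimage} applied to $(k, j)$, using $s_k = 1$, gives $X_{kj} \in B(\Stab_{\Aut(F_r)}(U))$. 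The Steinberg-type commutator identity $X_{ij}^2 = [X_{ik}^2, X_{kj}]$ cited in the proof of Corollary \ref{gamma2gensalt} then puts $X_{ij}^2$ into $B(\Stab_{\Aut(F_r)}(U))$ as well.

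The main obstacle I anticipate is the first step: the automorphism $\sigma_j$ lies outside the convenient framework of Remark \ref{autconstr}, so its stabilization of $U$ has to be checked by hand on the explicit basis. The squared-matrix case is a routine combinatorial case analysis once the role of the auxiliary looplet $k$ has been identified.
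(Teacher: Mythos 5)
Your proof is correct and takes essentially the same route as the paper: the automorphisms $\tau_j$ inverting $g_j$ as preimages of the $T_j$, \autoref{squaredcasepreimage} for the squared elementary matrices wherever its hypothesis holds, and the commutator relation $[X_{ik}^2,X_{kj}]=X_{ij}^2$ together with \autoref{oddcasepreimage} to cover the exceptional pair arising when there are exactly $r-2$ looplets. The only differences are cosmetic — the paper packages the last step as the alternative generating set of \autoref{gamma2gensalt} centered at a looplet, whereas you manufacture the missing $X_{ij}^2$ directly and stay with the generating set of \autoref{gamma2gens} — and your explicit check on the basis of $U$ that $\tau_j$ stabilizes $U$ supplies a detail the paper leaves implicit.
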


\begin{proof}
We will find preimages in $\Stab_{\Aut(F_r)}(U)$ to the elements of a generating set of $\Gamma_2$. As preimages for the matrices $\{T_j, j=1,\ldots,r\}$ we choose the automorphisms $\tau_j$, which invert the $j$-th generator $g_j$ and do not modify the other generators.

If we have at most $r-3$ looplets, we find preimages to all squares of elementary matrices by \autoref{squaredcasepreimage}. By \autoref{gamma2gens}, these generate $\Gamma_2$.

If we have $r-2$ looplets, we fix a loop $i$ with $s_i=1$ and find preimages for each of
\[
\{X_{ik}, k\ne i\} \cup \{ X_{ji}^2, j\ne i\},
\]
invoking \autoref{oddcasepreimage} for the elementary matrices and \autoref{squaredcasepreimage} for the squares of elementary matrices. 
By \autoref{gamma2gensalt} these generate $\Gamma_2$.
\end{proof} 

Recall that $S(v)$, introduced in \autoref{stabsubgroups}, is the set of matrices stabilizing the row vector $v$ under multiplication from right.

\begin{proposition}
\label{mixedcase}For a loop subgroup $U\le F_r$, $r\ge 3$, with at most $r-2$ looplets, we have
\[
S(v) \le \overline{B(\Stab_{\Aut(F_r)}(U))}
\]
where
\[
v_i = 
\begin{cases}
0, & \text{if } s_i \text{ odd} \\
1, & \text{if } s_i \text{ even}.
\end{cases}
\]
\end{proposition}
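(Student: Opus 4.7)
The plan is to apply Lemma \ref{stabgens}, which gives a generating set of $S(v)$ consisting of elementary matrices $X_{ij}$ with $s_i$ odd and double elementary matrices $X_{ik}X_{jk}$ with $s_i$ and $s_j$ both even. I then try to realize each such generator as the image under $B$ of a stabilizing automorphism, drawing on the preimage constructions of Section \ref{preimages}. Since every matrix exhibited this way lies in $B(\Stab_{\Aut(F_r)}(U)) \subseteq \GL_r(\Z)$, its mod-$2$ reduction lies in $\overline{B(\Stab_{\Aut(F_r)}(U))}$, which is what the claim demands.

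For the double elementary generators, Lemma \ref{evencasepreimage} applies verbatim: both $s_i$ and $s_j$ are even by hypothesis, and the lemma carries no further side condition, so $X_{ik}X_{jk} \in B(\Stab_{\Aut(F_r)}(U))$ immediately. For the elementary generators $X_{ij}$, I invoke Lemma \ref{oddcasepreimage}, which requires either $s_i = 1$ or the existence of some $k \ne i, j$ with $s_k > 1$. The assumption of at most $r-2$ looplets forces at least two non-looplets, and so this hypothesis can only fail when the non-looplets are exactly $\{i, j\}$ with $s_i \ge 3$.

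The main obstacle is therefore this one exceptional $X_{ij}$. My plan is to bypass it via the commutator identity $X_{ij} = [X_{ik}, X_{kj}]$ in $\GL_r(\Z)$ (valid for any $k \ne i, j$, cf.\ the relation for $\Gamma_2$ cited in Section \ref{linergroupsec}), choosing $k$ to be any looplet; such a $k$ exists because $r \ge 3$ and exactly two indices are non-looplets. Then Lemma \ref{oddcasepreimage} applies to $X_{ik}$ with $j$ serving as the external non-looplet ($s_j > 1$), and to $X_{kj}$ directly since $s_k = 1$. Hence both commutator factors, and thus $X_{ij}$ itself, lie in $B(\Stab_{\Aut(F_r)}(U))$. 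All generators of $S(v)$ are then accounted for, completing the inclusion $S(v) \le \overline{B(\Stab_{\Aut(F_r)}(U))}$.
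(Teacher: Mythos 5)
Your proposal is correct and follows essentially the same route as the paper's own proof: decompose $S(v)$ via Lemma \ref{stabgens}, handle the double elementary matrices with Lemma \ref{evencasepreimage}, the generic elementary matrices with Lemma \ref{oddcasepreimage}, and resolve the single exceptional case (exactly two non-looplets $i,j$) via the commutator identity $X_{ij}=[X_{ik},X_{kj}]$ with $k$ a looplet. No gaps.
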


\begin{proof}
By \autoref{stabgens}, $S(v)$ is generated by the elementary matrices $X_{ij}$ for $i\ne j$ and $v_i=0$ and the double elementary matrices $X_{ik}X_{jk}$ for $v_i=v_j=1$.

The double elementary matrices $X_{ik}X_{jk}$ are in $\overline{B(\Stab_{\Aut(F_r)}(U))}$ for $v_i=v_j=1$, as shown in \autoref{evencasepreimage}.

Let $i,j\in\{1,\ldots,r\}$, $i\ne j$ and $v_i=0$. We need to show that $X_{ij}$ is in $\overline{B(\Stab_{\Aut(F_r)}(U))}$. If $s_i=1$ or there is $k\in\{1,\ldots,r\}$ with $k\ne i,j$ and $s_k>1$, this follows from \autoref{oddcasepreimage}. If that is not the case, we are in the situation of $r-2$ looplets with $s_i>1$, $s_j>1$ and $s_k=1$ for $k\ne i,j$. Invoking \autoref{oddcasepreimage} for $X_{ik}$ and for $X_{kj}$, and using $[X_{ik},X_{kj}]=X_{ij}$, we obtain that $X_{ij}\in \overline{B(\Stab_{\Aut(F_r)}(U))}$.
\end{proof}

\section{The upper bound}
\label{upperboundsec}

To fully understand the situation, we yet have to find out whether the lower bound $S(v)$ from \autoref{mixedcase} is already the full group $\overline{B(\Stab_{\Aut(F_r)}(U))}$. It turns out that this is the case, and this group is the upper bound of the image of the stabilizer subgroup even for an arbitrary finite index subgroup $U$ of $F_r$:

\begin{proposition}
\label{generalupperbound}
Let $U\le F_r$ be a subgroup of finite index and let
\[
v_i \da 
\begin{cases}
0, & \text{if $\pi(g_i)$ is an even permutation} \\
1, & \text{if $\pi(g_i)$ is an odd permutation.} 
\end{cases}
\]
Then,
\[
\overline{B(\Stab_{\Aut(F_r)}(U))} \le S(v).
\]
\end{proposition}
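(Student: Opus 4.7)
The plan is to exploit two facts: that the sign of a permutation is invariant under conjugation, and that the sign map gives a homomorphism into the abelian group $\{\pm 1\}$, so the composite $\operatorname{sgn}\circ\pi$ factors through the abelianization of $F_r$.

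Concretely, I would first observe that $\operatorname{sgn}\circ\pi : F_r \to \Z/2\Z$ is a group homomorphism into an abelian group, hence uniquely determined by its values on the generators $g_i$, which are precisely the $v_i$ by construction. Since any such homomorphism factors through the abelianization $F_r \to \Z^r$, one obtains the explicit formula
\[
\operatorname{sgn}(\pi(w)) \equiv \sum_{i=1}^r v_i \,\#_{g_i}(w) \pmod 2 \qquad (w \in F_r).
\]

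The second ingredient is part 3 of \autoref{piprops}: for $\gamma \in \Stab_{\Aut(F_r)}(U)$ and any $w \in F_r$, the permutation $\pi(\gamma(w))$ is conjugate to $\pi(w)$ in $\operatorname{Sym}(N)$, so the two share the same sign. Applying the factorization above to both sides with $w = g_j$ yields
\[
(v \cdot B(\gamma))_j \;=\; \sum_i v_i\,\#_{g_i}(\gamma(g_j)) \;\equiv\; \operatorname{sgn}(\pi(\gamma(g_j))) \;=\; \operatorname{sgn}(\pi(g_j)) \;\equiv\; v_j \pmod 2,
\]
so $v\cdot B(\gamma) \equiv v \pmod 2$, i.e.\ $\overline{B(\gamma)} \in S(v)$; varying $\gamma$ over $\Stab_{\Aut(F_r)}(U)$ gives the claimed inclusion.

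I do not anticipate a real obstacle here: the proposition is essentially a bookkeeping consequence of the conjugation-invariance of the sign, once one notices that $\operatorname{sgn}\circ\pi$ descends to the abelianization. The only thing to verify is that the induced linear map $\Z^r \to \Z/2\Z$ is pairing with the row vector $v$, which is forced by the values on the generators, and that conventions (row vs.\ column, right vs.\ left multiplication) line up with the definition of $B$ in \autoref{defB}.
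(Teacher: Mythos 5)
Your proposal is correct and follows essentially the same route as the paper: both use part 3 of \autoref{piprops} to see that $\pi(\gamma(g_j))$ and $\pi(g_j)$ have the same parity, and both translate this into the congruence $\sum_i v_i\,\#_{g_i}(\gamma(g_j)) \equiv v_j \pmod 2$, i.e.\ $v\cdot\overline{B(\gamma)}=v$. Your explicit remark that $\operatorname{sgn}\circ\pi$ factors through the abelianization is just a cleaner phrasing of the step the paper states as ``the number of generators in the word $\gamma(g_i)$ whose associated permutation is odd equals $v_i$ modulo 2.''
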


\begin{proof}
Let $\gamma\in\Stab_{\Aut(F_r)}(U)$. For a generator $g_i$, $i\in\{1,\ldots,r\}$, the permutation $\pi(\gamma(g_i))$ has the same parity as $\pi(g_i)$, by \autoref{piprops} (3). Therefore, the number of generators in the word $\gamma(g_i)$ whose associated permutation is odd equals $v_i$ modulo 2:
\[
\sum_{j=1}^r \#_{g_j}\gamma(g_i) \cdot v_j \equiv v_i \pmod 2
\]
This equation can be written as $v\cdot \overline{B(\gamma)} = v$, hence $\overline{B(\gamma)} \in S(v)$. 
\end{proof}

This is an interesting result, as the subgroups $S(v)$ are maximal, but far from the only maximal groups.

\begin{theorem}
\label{sharpbound}
For a loop subgroup $U\le F_r$, $r\ge 3$, with at most $r-2$ looplets, $B(\Stab_{\Aut(F_r)}(U))$ is a congruence subgroup of level 2 and its image in $\GL_r(\Z/2\Z)$ is $S(v)$ where
\[
v_i = 
\begin{cases}
0, & \text{if } s_i \text{ odd} \\
1, & \text{if } s_i \text{ even}.
\end{cases}
\]
\end{theorem}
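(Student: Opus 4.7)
The theorem is essentially an assembly result: all the hard work has been done in the preceding sections, so my plan is to combine the three main building blocks.

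First, I would invoke \autoref{level} to conclude that $\Gamma_2 \le B(\Stab_{\Aut(F_r)}(U))$. This immediately establishes that $B(\Stab_{\Aut(F_r)}(U))$ is a congruence subgroup of level (dividing) $2$, and consequently that this group is fully determined by its image in $\GL_r(\Z/2\Z)$, namely by $\overline{B(\Stab_{\Aut(F_r)}(U))}$.

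Second, I would sandwich $\overline{B(\Stab_{\Aut(F_r)}(U))}$ between $S(v)$ on both sides. The lower bound $S(v) \le \overline{B(\Stab_{\Aut(F_r)}(U))}$ is exactly \autoref{mixedcase}, and the upper bound $\overline{B(\Stab_{\Aut(F_r)}(U))} \le S(v)$ is exactly \autoref{generalupperbound}, so together they yield equality.

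The only small point to check is that the two definitions of $v$ agree. The vector from \autoref{mixedcase} has $v_i=0$ when $s_i$ is odd, while the vector from \autoref{generalupperbound} has $v_i=0$ when $\pi(g_i)$ is an even permutation. Since $\pi(g_i)$ is (by definition of a loop subgroup) a single $s_i$-cycle with all other orbits trivial, its sign is $(-1)^{s_i-1}$, so it is an even permutation precisely when $s_i$ is odd. Hence both vectors coincide, and the three results can indeed be chained. I expect no genuine obstacle here; the only mild subtlety is verifying that the congruence-level statement really does follow from containment of $\Gamma_2$ (i.e.\ confirming that $\Gamma_2$ in this paper is the full principal congruence subgroup of $\GL_r(\Z)$, as in \autoref{gamma2gens}), and then simply recording the parity translation above.
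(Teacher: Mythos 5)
Your proposal is correct and follows exactly the paper's own argument: \autoref{level} gives $\Gamma_2\le B(\Stab_{\Aut(F_r)}(U))$, and the image in $\GL_r(\Z/2\Z)$ is pinned down by combining \autoref{mixedcase} with \autoref{generalupperbound}, noting that $\pi(g_i)$ is odd exactly when $s_i$ is even. Your explicit check that the two definitions of $v$ coincide is the same observation the paper makes, just spelled out in slightly more detail.
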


\begin{proof}
\autoref{mixedcase} applies because $\Gamma_2\le\overline{B(\Stab_{\Aut(F_r)}(U))}$ by \autoref{level} and the permutation $\pi(g_i)$ is odd if and only if $s_i$ is even. \autoref{generalupperbound} provides the inclusion in the other direction.
\end{proof}

%As indicated in the preface, this result is quite specific, as loop subgroups are rare. It can, however, easily be generalized to conjugates of loop subgroups, using $\Stab_{\Aut(F_r)}(\gamma(U)) = \gamma \circ \Stab_{\Aut(F_r)}(U) \circ \gamma^{-1}$.
%
%\begin{corollary}
%\label{sharpboundconjugate}
%For a loop subgroup $U\le F_r$, $r\ge 3$, with at most $r-2$ looplets, and an automorphism $\gamma\in\Aut(F_r)$ we have
%\[
%S(v\cdot\overline{B(\gamma)}^{-1}) = \overline{B(\Stab_{\Aut(F_r)}(\gamma(U)))}
%\]
%where
%\[
%v_i = 
%\begin{cases}
%0, & \text{if } s_i \text{ odd} \\
%1, & \text{if } s_i \text{ even}.
%\end{cases}
%\]
%\end{corollary}
%
% \begin{proof}
% \begin{align*}
% S(v\cdot\overline{B(\gamma)}^{-1}) &= \overline{B(\gamma)}\cdot S(v) \cdot \overline{B(\gamma)}^{-1} \\
% &= \overline{B(\gamma)}\cdot \overline{B(\Stab_{\Aut(F_r)}(U))} \cdot \overline{B(\gamma)}^{-1} \\
% &= \overline{B(\gamma \circ \Stab_{\Aut(F_r)}(U) \circ \gamma^{-1})} \\
% &= \overline{B(\Stab_{\Aut(F_r)}(\gamma(U))}
% \end{align*}
% \end{proof}

\section{The excluded case}
\label{excludedcasesec}

In the previous sections, we have always excluded loop subgroups with exactly $r-1$ looplets. These subgroups have some special properties that make them break rank and therefore, they are handled separately here.

Let $U$ be a loop subgroup with exactly $r-1$ looplets, and, without loss of generality, assume that $s_1\ne 1$. The subgroup can now be written as 
\[
U = \{ w\in F_r \mid \#_{g_1} w \equiv 0 \pmod {s_1}\}.
\]
%This fact is easily seen by tracing a word in the coset graph of $U$: Only occurrences of $g_1$ advance on the graph, and $g_1$ must occur a multiple of $s_i$ times to end up at the node $U$.

Let $A:F_r \to \Z^r$ be the surjective abelianization map defined by $w \mapsto (\#_{g_i} w)_{i=1,\ldots,r}$. Then $U = A^{-1}(U')$ with $U' \da \{ v\in\Z^r \mid v_1 \equiv 0 \mod s_1\}\le \Z^r$. This allows us to calculate $B(\Stab_{\Aut(F_r)}(U))$ using the following, more general observation:

\begin{remark}
\label{stabofpreimage}
Let $\varphi : G \to G'$ be a surjective group homomorphism with its kernel characteristic in $G$. Let $H\le G$ and $H'\le G'$ be subgroups such that $H = \varphi^{-1}(H')$. Let $\psi:\Aut(G)\to \Aut(G')$ be the map induced by $\varphi$. Then
\[
\psi(\Stab_{\Aut(G)}(H)) = \Stab_{\Aut(G')}(H') \cap \psi(\Aut(G)).
\]
\end{remark}

\ifIJM
\else
\begin{proof}
For $\gamma\in\Aut(G)$, $y\in G'$ the map $\psi:\Aut(G) \to \Aut(G')$ is defined by $\psi(\gamma)(y) \da \varphi(\gamma(x))$ for an $x\in\varphi^{-1}(y)$. This is well-defined because the kernel of $\varphi$ is characteristic. Thus
\begin{align*}
\psi(\Stab_{\Aut(G)}(H)) 
&= \psi(\{\gamma \in \Aut(G) \mid \gamma(H) = H\}) \\
&= \{\psi(\gamma) \mid \gamma \in \Aut(G),\ \forall x\in H: \gamma(x) \in H\} \\
&= \{\psi(\gamma) \mid \gamma \in \Aut(G),\ \forall x\in H: \varphi(\gamma(x)) \in \varphi(H)\} \\
&= \{\psi(\gamma) \mid \gamma \in \Aut(G),\ \forall y\in H': \varphi(\gamma(\varphi^{-1}(y))) \in \varphi(H)\} \\
&= \{\psi(\gamma) \mid \gamma \in \Aut(G),\ \forall y\in H': \psi(\gamma)(y) \in H'\} \\
&= \{\gamma' \in \Aut(G') \mid \forall y\in H': \gamma'(y) \in H'\} \cap \psi(\Aut(G))\\
&= \Stab_{\Aut(G')}(H') \cap \psi(\Aut(G)) \ifIJM\qedhere\fi
\end{align*}
\end{proof}
\fi

\begin{proposition}
\label{excludedcase}
For the $s_1/1/\ldots/1$ loop subgroup $U$, $s_i\in \N$, of $F_r$ we have
\[
\Gamma_{s_1} \le B(\Stab_{\Aut(F_r)}(U)) = \Stab_{\GL_r(\Z)}(U') \le \GL_r(\Z)
\]
where
\[
U' \da \{v \in \Z^r \mid v_1 \equiv 0\mod s_1\}.
\]
\end{proposition}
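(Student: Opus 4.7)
The plan is to apply \autoref{stabofpreimage} with $G = F_r$, $G' = \Z^r$, $\varphi = A$ the abelianization map, $H = U$, and $H' = U'$. The identification $U = A^{-1}(U')$ has already been observed in the paragraph preceding the proposition, and $\ker A = [F_r,F_r]$ is characteristic in $F_r$. Under the canonical identification $\Aut(\Z^r) = \GL_r(\Z)$, the induced map $\psi$ of \autoref{stabofpreimage} is exactly the map $B$ of \autoref{defB}, so the remark yields
\[
B(\Stab_{\Aut(F_r)}(U)) \;=\; \Stab_{\GL_r(\Z)}(U') \cap B(\Aut(F_r)).
\]

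To drop the intersection I will use that $B$ is surjective: the Nielsen transformations in $\Aut(F_r)$ are mapped by $B$ onto the elementary matrices $X_{ij}$ and the sign-changes $T_i$, which together generate $\GL_r(\Z)$ (cf.\ \autoref{linergroupsec}). Hence $B(\Aut(F_r)) = \GL_r(\Z)$, giving the middle equality $B(\Stab_{\Aut(F_r)}(U)) = \Stab_{\GL_r(\Z)}(U')$ of the proposition.

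It then remains to verify $\Gamma_{s_1} \le \Stab_{\GL_r(\Z)}(U')$, which is a direct mod-$s_1$ computation. For $M = (m_{ij}) \in \Gamma_{s_1}$ we have $m_{1j} \equiv \delta_{1j} \pmod{s_1}$, and so for $v \in U'$ the first coordinate of $Mv$ reduces modulo $s_1$ to $v_1$, which is $0$. Hence $Mv \in U'$, and since $M^{-1}$ also lies in $\Gamma_{s_1}$, the matrix $M$ in fact maps $U'$ onto itself.

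I do not anticipate any genuine obstacle. The only subtle point is consistency of conventions: since \autoref{defB} forces $\GL_r(\Z)$ to act on $\Z^r$ by left multiplication on column vectors, $\Stab_{\GL_r(\Z)}(U')$ must be interpreted as $\{M \in \GL_r(\Z) \mid M \cdot U' = U'\}$ with this action (rather than the right-action convention used for $S(v)$ in \autoref{stabsubgroups}). Once this is fixed, \autoref{stabofpreimage}, the surjectivity of $B$, and the entrywise check combine cleanly to prove the proposition.
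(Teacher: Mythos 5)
Your proposal is correct and follows essentially the same route as the paper: identify $U = A^{-1}(U')$, apply \autoref{stabofpreimage} with the abelianization, remove the intersection with $B(\Aut(F_r))$ via surjectivity of $B$, and verify $\Gamma_{s_1}\le\Stab_{\GL_r(\Z)}(U')$ directly. The only cosmetic difference is that you justify surjectivity via Nielsen transformations while the paper points to $\tau_1$ and the automorphisms from \autoref{oddcasepreimage}; your explicit attention to the left-action convention is a reasonable extra care the paper leaves implicit.
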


\begin{proof}
As noted before, $U = A^{-1}(U')$. $B$ is surjective, because preimages of a generating set of $\GL_r(\Z)$ are given by $\tau_1$ and the automorphisms in the proof of \autoref{oddcasepreimage}. Therefore, \autoref{stabofpreimage} gives us
\[
B(\Stab_{\Aut(F_r)}(U)) = \Stab_{\GL_r(\Z)}(U'). 
\]

Since every matrix in $\Gamma_{s_1}$ stabilizes $U'$, $B(\Stab_{\Aut(F_r)}(U))$ contains $\Gamma_{s_1}$.
\end{proof}

As a matter of fact, $s_1$ is the level of the congruence subgroup $B(\Stab_{\Aut(F_r)}(U))$. The difference to the other loop subgroups is evident: While here we easily reach any congruence subgroup level, in the other cases we do not exceed level 2.

\bibliography{bib}
\ifIJM
\bibliographystyle{ijmart}
\else
\bibliographystyle{amsalpha}
\fi
\end{document}